\declaretheorem[name=Theorem,numberwithin=section]{thm}
\declaretheorem[name=Lemma,sibling=thm]{lemma}
\declaretheorem[name=Corollary,sibling=thm]{cor}
\numberwithin{equation}{section}
\crefname{lemma}{Lemma}{Lemmata}
\crefname{prop}{Proposition}{Propositions}
\crefname{thm}{Theorem}{Theorems}
\crefname{cor}{Corollary}{Corollaries}
\crefname{defn}{Definition}{Definitions}
\crefname{example}{Example}{Examples}
\crefname{rem}{Remark}{Remarks}
\crefname{ass}{Assumption}{Assumptions}
\crefname{not}{Notation}{Notation}
\renewcommand{\-}{\bar}
\newcommand{\cn}{\colon}
\newcommand{\sub}{\subset}
\newcommand{\R}{\mathbb{R}}
\renewcommand{\S}{\mathbb{S}}
\newcommand{\Di}{\mathbb{D}}
\newcommand{\B}{\mathbb{B}}
\newcommand{\8}{\infty}
\renewcommand{\a}{\alpha}
\newcommand{\g}{\gamma}
\renewcommand{\d}{\delta}
\newcommand{\e}{\epsilon}
\renewcommand{\l}{\lambda}
\renewcommand{\o}{\omega}
\renewcommand{\O}{\Omega}
\newcommand{\D}{\Delta}
\newcommand{\del}{\partial}
\newcommand{\inpr}[2]{\left\langle #1,#2 \right\rangle}
\newcommand{\fr}[2]{\frac{#1}{#2}}
\newcommand{\x}{\times}
\newcommand{\Theo}[3]{\begin{#1}\label{#2} #3 \end{#1}}
\newcommand{\pf}[1]{\begin{proof} #1 \end{proof}}
\newcommand{\eq}[1]{\begin{equation}\begin{alignedat}{2} #1 \end{alignedat}\end{equation}}
\newcommand{\br}[1]{\left(#1\right)}
\newcommand{\ra}{\rightarrow}
\newcommand{\mc}{\mathcal}
\renewcommand{\it}{\textit}
\newcommand{\mrm}{\mathrm}
\newcommand{\hp}{\hphantom}
\protected\def\ignorethis#1\endignorethis{}
\let\endignorethis\relax
\newcommand{\bb}[1]{\mathbb{#1}}
\newcommand{\pard}[2]{\frac{\partial #1}{\partial #2}}
\newcommand{\ip}[2]{\left \langle #1 , #2 \right\rangle}
\newcommand{\n}{\nabla}
\begin{document}
\title[Free boundary convex hypersurfaces in a ball]{A geometric inequality for convex free boundary hypersurfaces in the unit ball}
\author{Ben Lambert and Julian Scheuer}
\begin{abstract}We use the inverse mean curvature flow with a free boundary perpendicular to the sphere to prove a geometric inequality involving the Willmore energy for convex hypersurfaces of dimension $n\geq 3$ with boundary on the sphere.
\end{abstract}

\keywords{Inverse mean curvature flow, Free boundary problem, Geometric inequality, Willmore functional}
\subjclass[2010]{53C44, 58C35, 58J32}

\address{Ben Lambert, University of Konstanz, Zukunftskolleg, Box 216, 78457 Konstanz, Germany}
\email{benjamin.lambert@uni-konstanz.de}

\address{Julian Scheuer, Albert-Ludwigs-Universit{\"a}t, Mathematisches Institut, Eckerstr.~1,
79104 Freiburg, Germany}
\email{julian.scheuer@math.uni-freiburg.de}
\maketitle
\section{Introduction}
In \cite{LambertScheuer:04/2016} we considered the inverse mean curvature flow perpendicular to the sphere, namely a family of embeddings
\eq{X\cn \Di\x[0,T^*)\ra\R^{n+1},}
where $\Di$ denotes the $n$-dimensional unit disk, which satisfy the Neumann boundary problem
\begin{subequations}\label{IMCF}\begin{align}\dot{X}&=\fr{1}{H}N, \label{Floweq}\\
		X(\del \Di)&=\del X(\Di)\subset\label{Flow2} \S^{n}, \\
		0&=\inpr{N_{|\del\Di}}{\~N(X_{|\del\Di})}, \label{BoundCond}\\
		\inpr{\dot{\g}(0)}{\~N}&\geq 0\quad\forall \g\in C^{1}((-\e,0],M_{t})\cn \g(0)\in\del X(\Di) \label{Flow4}\end{align}\end{subequations}
with initial embedding $X_0$ of a strictly convex hypersurface $M_0,$ also satisfying the conditions \eqref{Flow2}, \eqref{BoundCond} and \eqref{Flow4}. Here $\~N$ denotes the outward unit normal of $\S^n.$ In the following we will refer to these three conditions by saying that $M_0$ is {\it{perpendicular to the sphere from the inside.}}

In \cite[Thm.~1]{LambertScheuer:04/2016} we proved that the maximal time $T^*>0$ until which a smooth solution exists is characterized by the $C^{1,\a}$-convergence of the embeddings $X(t,\cdot)$ to the embedding of a flat disk bisecting the unit ball; also compare \cite[Rem.~1]{LambertScheuer:04/2016}. The aim of this paper is to apply this convergence result to prove a Li-Yau type inequality for convex hypersurfaces with boundary in any dimension $n\geq 3.$ 

\begin{thm}\label{main}
Let $n\geq 3$ and $M^n\subset\bb{R}^{n+1}$ be a smoothly embedded $n$-disk, such that $M^n$ is a convex hypersurface perpendicular to $\S^{n}$ from the inside. Then there holds
\eq{\label{main1}\frac{1}{2}|M|^{\frac{2-n}{n}}\int_{M}H^{2}+\omega_{n}^{\fr{2-n}{n}}|\partial M|\geq \omega_{n}^{\fr{2-n}{n}}|\S^{n-1}|}
and equality holds if and only if $M$ is a perpendicularly intersecting hyperplane.
\end{thm}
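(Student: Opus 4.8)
The plan is to run the free boundary inverse mean curvature flow \eqref{IMCF} with initial datum $M_0=M$ and to read off \eqref{main1} from the monotonicity of a single scale-adapted quantity. Writing $M_t=X(t,\Di)$, I would set
\[
Q(t)=\tfrac12|M_t|^{\frac{2-n}{n}}\int_{M_t}H^2+\omega_n^{\frac{2-n}{n}}|\del M_t|,
\]
so that $Q(0)$ is exactly the left-hand side of \eqref{main1}. The two inputs from \cite[Thm.~1]{LambertScheuer:04/2016} I would use are that the flow exists on a maximal interval $[0,T^*)$ and converges in $C^{1,\a}$ to the flat bisecting disk; in particular $|\del M_t|\to|\S^{n-1}|$. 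Since the normal speed is $1/H$, the area element satisfies $\frac{d}{dt}d\mu=d\mu$, whence $|M_t|=|M_0|e^t$, and because the $C^1$-limit has area $\omega_n$ one gets $|M_t|\nearrow\omega_n$; thus $T^*=\log(\omega_n/|M_0|)$ and $|M_t|\le\omega_n$ throughout, a fact I expect to need when comparing prefactors.

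The core is to show $Q'(t)\le0$. For the bulk term I would use the evolution $\dot H=-\D H^{-1}-|A|^2H^{-1}$ in the flat ambient space, compute $\frac{d}{dt}\int_{M_t}H^2=\int(H^2-2|A|^2)-2\int|\n H|^2H^{-2}+2\int_{\del M_t}H^{-1}\del_\nu H$ after one integration by parts, and observe that the factor $\frac{2-n}{n}$ produced by differentiating $|M_t|^{\frac{2-n}{n}}$ combines with the $H^2$ term so that the interior contribution collapses to $|M_t|^{\frac{2-n}{n}}\big(\tfrac1n\int H^2-\int|A|^2-\int|\n H|^2H^{-2}\big)$, which is $\le0$ by the Cauchy--Schwarz bound $|A|^2\ge\frac1n H^2$. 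The remaining boundary term $|M_t|^{\frac{2-n}{n}}\int_{\del M_t}H^{-1}\del_\nu H$ must then be balanced against $\frac{d}{dt}|\del M_t|$.

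To control the boundary I would exploit the free boundary condition geometrically. Orthogonality $\inpr{N}{\~N}=0$ together with $\~N=X$ on $\S^n$ forces the outward conormal of $\del M_t$ in $M_t$ to be $\nu=X$; differentiating $\inpr{N}{X}=0$ tangentially gives $h_{\nu\a}=0$, and since $\del M_t$ lies on the unit sphere it is totally umbilic in $M_t$ with second fundamental form $-\d_{\a\b}$. Feeding these identities into the Codazzi equations yields $\del_\nu H=\n_\nu h_{\nu\nu}+nh_{\nu\nu}-H$ on $\del M_t$, while the first variation of boundary area reads $\frac{d}{dt}|\del M_t|=\int_{\del M_t}H^{-1}(H-h_{\nu\nu})$. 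Substituting both and using convexity in the form $0\le h_{\nu\nu}\le H$, I would argue that the combined boundary flux is nonpositive; reconciling the mismatched prefactors $|M_t|^{\frac{2-n}{n}}$ and $\omega_n^{\frac{2-n}{n}}$ via $|M_t|\le\omega_n$ and absorbing the stray normal-derivative term $\n_\nu h_{\nu\nu}$ is the step I expect to be the \emph{main obstacle}, and is precisely where the structure of the perpendicular free boundary is essential.

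Granting $Q'\le0$, the inequality follows immediately: as $t\to T^*$ one has $\frac12|M_t|^{\frac{2-n}{n}}\int H^2\ge0$ and $|\del M_t|\to|\S^{n-1}|$, so $\lim_{t\to T^*}Q(t)\ge\omega_n^{\frac{2-n}{n}}|\S^{n-1}|$, whence $Q(0)\ge\lim_{t\to T^*}Q(t)\ge\omega_n^{\frac{2-n}{n}}|\S^{n-1}|$; note this sidesteps having to prove $\int H^2\to0$ under mere $C^{1,\a}$ control. For rigidity, equality forces $Q\equiv\omega_n^{\frac{2-n}{n}}|\S^{n-1}|$, hence $Q'\equiv0$, making every estimate above an equality: $|A|^2=\frac1nH^2$ and $\n H=0$ on each $M_t$, so each $M_t$ is totally umbilic with constant mean curvature, i.e. a spherical cap or a flat disk. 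The curved caps are excluded since a direct computation shows they satisfy \eqref{main1} strictly, leaving $H\equiv0$; an orthogonally intersecting flat disk must bisect the ball, which is exactly the equality configuration.
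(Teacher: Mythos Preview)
Your monotonicity idea is exactly the backbone of the paper's argument, but there are two genuine gaps.

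First, you cannot simply ``run the free boundary inverse mean curvature flow with initial datum $M_0=M$'' when $M$ is merely weakly convex. The existence and convergence theorem you quote from \cite{LambertScheuer:04/2016} requires strict convexity; for a weakly convex $M$ one may have $H=0$ somewhere, so $1/H$ is not even defined, and nothing is known about \eqref{IMCF} in that setting. The paper addresses this by first proving the strict inequality for strictly convex $M$ (\cref{strictlyconvexmain}), and then approximating a weakly convex $M$ in $C^{2,\alpha}$ by strictly convex hypersurfaces obtained from a short burst of mean curvature flow with the same boundary condition (\cref{Approx}, \cref{approx}). This approximation step is essential and is entirely absent from your proposal.

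Second, the boundary term is not an obstacle once you use the correct identity. Perpendicularity to the sphere gives directly $\langle\nabla H,\tilde N\rangle=-H$ on $\partial M_t$ (cf.\ \cite[Lemma~6]{LambertScheuer:04/2016}), so after integration by parts the boundary contribution to $\frac{d}{dt}\big(\tfrac12\int_{M_t}H^2\big)$ is exactly $-|\partial M_t|$; your Codazzi detour with the unresolved term $\nabla_\nu h_{\nu\nu}$ is unnecessary. Combined with $\frac{d}{dt}|\partial M_t|<|\partial M_t|$ (the induced boundary flow in $\S^n$ has speed $1/H$ and $h_{\nu\nu}<H$), one finds $\dot Q(t)\le\big(\omega_n^{\frac{2-n}{n}}-|M_t|^{\frac{2-n}{n}}\big)|\partial M_t|\le 0$, with the cancellation of the $H^2$ terms exact rather than something to be ``absorbed''.

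Finally, your rigidity argument does not survive the first gap: if $M$ is strictly convex the inequality is strict, and if $M$ is only weakly convex you cannot start the flow, so ``$Q'\equiv 0$ forces umbilicity on each $M_t$'' is vacuous. The paper instead exploits the quantitative form of the monotonicity: for the approximating strictly convex $M^\e$ one has $\dot Q^\e(t)\le -c<0$ on a time interval $[0,T]$ with $c,T$ depending only on $|M|,|\partial M|,n$ (here the volume estimate $|M|<\omega_n$ of \cref{Areaest} is needed to get a uniform lower bound on $T^*_\e$). Integrating and letting $\e\to 0$ contradicts $Q(M)=\omega_n^{\frac{2-n}{n}}|\S^{n-1}|$ unless $\partial M$ is an equator, which forces $M$ to be the flat disk.
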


Here $|\cdot|$ denotes the respective surface measures of $M,$ $\del M$ and $\S^{n-1}$ as inherited from $\R^{n+1}$ and $\o_n$ is the volume of the $n$-dimensional unit ball. We call a hypersurface $M$ {\it{convex}}, if all the principal curvature at any point are non-negative and {\it{strictly convex},} if they are all positive throughout $M$. Note that convex or strictly convex hypersurfaces with boundary may be way more complicated than in the boundaryless case. In particular the well known supporting hyperplane property in the boundaryless case is not valid without further assumptions if $M$ has nonempty boundary, compare for example the nice treatment of these issues in \cite{Ghomi:/2001}.

In the case of surfaces, $n=2,$ inequalities similar to \eqref{main1} have attracted a lot of attention. In this situation an even sharper version of \eqref{main1} was shown in broader generality than in the restricted class of convex surfaces, and was even demonstrated in higher codimension. Namely, replacing the leading factor $1/2$ in \eqref{main1} by $1/4,$ Volkmann proved the inequality without the convexity assumption in \cite[Prop.~0.2]{Volkmann:/2014}. One of his main results, \cite[Thm.~1.5]{Volkmann:/2014}, can be viewed as a generalization of an inequality by Li and Yau, cf. \cite{LiYau:/1982}, for closed surfaces to the boundary case. In the case of higher dimensions less is known, let us only mention a result by Brendle on minimal surfaces, \cite{Brendle:06/2012}. We refer to the bibliography in \cite{Volkmann:/2014} for a broader overview over the topic. To our knowledge, the inequality \eqref{main1} has not previously been treated in the higher dimensional hypersurface case.     

\section{The case of strictly convex hypersurfaces}
In order to prove \cref{main} we will use the well established strategy to show that the left hand side of \eqref{main1} is decreasing under the flow and then use the convergence result for the flow to show that it limits into the right hand side of \eqref{main1}. For this purpose we need control on the $L^2$-norm of $H.$
\Theo{lemma}{HL2}{
Let the family $(M_t)$ of strictly convex hypersurfaces evolve by \eqref{IMCF}. Then for all $1\leq p<\8$ there holds
\eq{\lim_{t\ra T^{*}}\int_{M_{t}}H^{p}(\cdot,t)=0.}
}

\pf{
From \cite[Lemma~11]{LambertScheuer:04/2016} we know that after a rotation of coordinates
\eq{\ip{N}{e_0}\leq c_0<0}
for some constant $c_0=c_0(M_0).$
Due to the Gaussian formula the height function $w=\ip{X}{e_0}$ satisfies
\eq{\D w=-H\ip{N}{e_0}\geq -c_0H}
and on the boundary it satisfies
\eq{\ip{\n w}{\~N}=w, }
compare \cite[Lemma~7]{LambertScheuer:04/2016}. Hence
\eq{\int_{M_t}H\leq -c_0^{-1}\int_{M_t}\D w=-c_0^{-1}\int_{\del M_t}w\ra 0,\quad t\ra T^*,}
since the boundaries $\del M_t\sub \S^n$ converge to the equator in $C^1.$
The complete result follows due to the boundedness of $H,$ compare \cite[Prop.~2]{LambertScheuer:04/2016}, and interpolation.
}

Now we can prove \cref{main} in the special case of a strictly convex hypersurface, which will also be needed in the proof of the limiting case.

\Theo{lemma}{strictlyconvexmain}{
Let $n\geq 2$ and $M\sub\R^{n+1}$ be a smooth and strictly convex hypersurface perpendicular to $\S^{n}$ from the inside. Then there holds
\eq{\fr{1}{2}|M|^{\fr{2-n}{n}}\int_{M}H^{2}+\o_{n}^{\fr{2-n}{n}}|\del M|> \o_{n}^{\fr{2-n}{n}}|\S^{n-1}|.}
}

\pf{

Under the inverse mean curvature flow perpendicular to the sphere from \cite[Lemma~2.3.1, Lemma~2.3.4]{Gerhardt:/2006} with $M_{0}=M$ as initial hypersurface we have
\eq{\dot{H}=\Delta\br{-\fr{1}{H}}-\fr{\|A\|^{2}}{H}}
and
\eq{\fr{d}{dt}\sqrt{\det(g_{ij})}=\sqrt{\det(g_{ij})}.\label{Volevol}}
Thus 
\eq{\fr{d}{dt}\br{\fr 12\int_{M_{t}}H^{2}d\mu_{t}}&=\int_{M_{t}}H\Delta\br{-\fr 1H}d\mu_{t}-\int_{M_{t}}\|A\|^{2}d\mu_{t}+\fr 12\int_{M_{t}}H^{2}d\mu_{t}\\
		&=-\int_{M_{t}}\fr{\|\nabla H\|^{2}}{H^{2}}d\mu_{t}-\int_{M_{t}}\|A\|^{2}d\mu_{t}+\fr 12\int_{M_{t}}H^{2}d\mu_{t}\\
			&\hp{=}-|\del M_{t}|,}
where we used the divergence theorem and 
\eq{\ip{\n H}{\~N}=-H,}
compare \cite[Lemma~6]{LambertScheuer:04/2016}.
Since
\eq{\|A\|^{2}=\|\mathring{A}\|^{2}+\fr 1n H^{2},}
we have
\eq{\fr 12 H^{2}-\|A\|^{2}=\fr{n-2}{2n}H^{2}-\|\mathring{A}\|^2}
and thus
\eq{\fr{d}{dt}\br{\fr 12\int_{M_{t}}H^{2}d\mu_t}&=-\int_{M_{t}}\fr{\|\nabla H\|^{2}}{H^{2}}d\mu_{t}-\int_{M_{t}}\|\mathring{A}\|^{2}d\mu_{t}\\
		&\hp{=}+\fr{n-2}{2n}\int_{M_{t}}H^{2}d\mu_{t}
			-|\del M_{t}|.}

Furthermore, the volume elements of the induced hypersurfaces
\eq{y_{t}\cn\del \Di\ra\S^{n}} 
satisfy
\eq{\fr{d}{dt}\sqrt{\det(\g_{IJ})}=\fr{\g^{IJ}\eta_{IJ}}{H}\sqrt{\det(\g_{IJ})}< \sqrt{\det(\g_{IJ})},}
where $\g_{IJ}$ and $\eta_{IJ}$ denotes the metric and the second fundamental form of these hypersurfaces respectively. This is due to the fact that they satisfy a related flow equation in the sphere
\eq{\dot{y}=\fr 1H\nu,}
compare \cite[equ.~(20)]{LambertScheuer:04/2016}.
Define
\eq{\label{Q}Q(t)=\fr{1}{2}|M_{t}|^{\fr{2-n}{n}}\int_{M_{t}}H^{2}+\o_{n}^{\fr{2-n}{n}}|\del M_{t}|.}
By the previous calculations we have
\eq{\dot{Q}(t)&<\fr{2-n}{2n}|M_{t}|^{\fr{2-n}{n}}\int_{M_{t}}H^{2}+\fr{n-2}{2n}|M_{t}|^{\fr{2-n}{n}}\int_{M_{t}}H^{2}-|M_{t}|^{\fr{2-n}{n}}|\del M_{t}|\\
			&\hp{=}+\o_{n}^{\fr{2-n}{n}}|\del M_{t}|\\ \label{Qdot}
			&=\br{\o_{n}^{\fr{2-n}{n}}-|M_{t}|^{\fr{2-n}{n}}}|\del M_{t}|\\
			&\leq 0,}
since we already know by \cite[Thm.~1, Rem.~5]{LambertScheuer:04/2016} that $|M_{t}|$ is increasingly converging to $\o_n.$
Furthermore we know by \cref{HL2} that
\eq{\int_{M_{t}}H^{2}\ra 0}
and thus we obtain
\eq{Q(0)> Q(T^{*})=\o_{n}^{\fr{2-n}{n}}|\S^{n-1}|.}
}

We also need the following exact description of the maximal time of existence of a smooth solution to \eqref{IMCF}.

\begin{lemma}[Exact existence time]
Suppose the initial data $M_0$ to \eqref{IMCF} is strictly convex. Then the maximal time of existence $T^*$ is
\label{existtime}
\eq{T^* = \log\left(\frac{\omega_n}{|M_0|}\right).}
In particular we obtain the volume estimate
\eq{|M_0|<\o_n.}
\end{lemma}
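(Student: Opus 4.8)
The plan is to integrate the volume-element evolution \eqref{Volevol} and combine it with the already established convergence $|M_t|\ra\o_n$. Since \eqref{Volevol} reads $\fr{d}{dt}\sqrt{\det(g_{ij})}=\sqrt{\det(g_{ij})}$ as a pointwise scalar ODE on the fixed parameter domain $\Di$, its solution is $\sqrt{\det(g_{ij})}(t)=e^{t}\sqrt{\det(g_{ij})}(0)$. Integrating over $\Di$, which does not change with $t$, then yields the clean identity
\eq{|M_t|=e^{t}|M_0|.}

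Next I would pass to the limit $t\ra T^{*}$. By \cite[Thm.~1, Rem.~5]{LambertScheuer:04/2016} the embeddings converge in $C^{1,\a}$ to a flat disk bisecting the unit ball, whose $n$-dimensional volume is exactly $\o_n$; in particular $|M_t|$ increases to $\o_n$. Inserting this into the identity above gives $\o_n=e^{T^{*}}|M_0|$, and solving for $T^{*}$ produces
\eq{T^{*}=\log\br{\fr{\o_n}{|M_0|}}.}
The volume estimate is then immediate: the flow exists for a strictly positive time, so $T^{*}>0$, which forces $\o_n/|M_0|>1$, i.e.\ $|M_0|<\o_n$.

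I do not expect a genuine obstacle here, since the argument reduces to a single integration once \eqref{Volevol} is solved explicitly. The only point demanding care is that the convergence $|M_t|\ra\o_n$ must be an honest limit as $t\ra T^{*}$, and that the limiting disk really has volume $\o_n$; both are supplied by the cited convergence theorem, which identifies the limit as the flat equatorial unit disk. Strict convexity of $M_0$ enters precisely through the applicability of that theorem.
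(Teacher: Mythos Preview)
Your argument is correct and follows essentially the same route as the paper: integrate the volume-element evolution \eqref{Volevol} to obtain $|M_t|=e^{t}|M_0|$, then invoke the $C^{1,\a}$-convergence of the flow to a flat unit disk to conclude $\o_n=e^{T^{*}}|M_0|$, from which both the formula for $T^{*}$ and the strict inequality $|M_0|<\o_n$ follow. The only cosmetic difference is that you solve the scalar ODE pointwise before integrating, whereas the paper first passes to $\fr{d}{dt}|M_t|=|M_t|$ and then integrates; the two are equivalent.
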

\begin{proof}
Using (\ref{Volevol}), we see that $\frac{d}{dt}|M_t| = |M_t|$ and so 
\begin{equation}|M_t| = e^t|M_0|. \label{AreaFormula}\end{equation}
Since we know that the maximal time is when the flow becomes a flat disk and the flow converges in $C^{1, \beta},$ cf. \cite[Rem.~7.4]{LambertScheuer:04/2016}, we know $\omega_n = e^{T^*}|M_0|$ and the equation follows.
\end{proof}

\section{Approximation of weakly convex hypersurfaces}

One of the main difficulties in proving \cref{main} is the lack of information about the IMCF for weakly convex hypersurfaces. The proof of the result in \cite{LambertScheuer:04/2016} makes essential use of the strict convexity. Hence it is not straightforward to obtain the limiting case in \cref{main}. We will use approximation by strictly convex hypersurfaces to overcome this obstacle. To do this we use mean curvature flow with the same Neumann boundary conditions. More specifically, we still assume $M_0$ is parametrised by $X_0:\bb{D} \ra \bb{R}^{n+1}$. Contrary to our previous solution $X$ of the inverse mean curvature flow, we now consider the solution $F:\bb{D}\times[0,T)\ra\bb{R}^{n+1}$ of the mean curvature flow with Neumann boundary condition, i.e. 

\begin{subequations}\label{MCF}\begin{align}\dot{F}&=-HN, \\
		X(\del \Di)&=\del X(\Di)\subset \S^{n}, \\
		0&=\inpr{N_{|\del\Di}}{\~N(X_{|\del\Di})}, \\
		\inpr{\dot{\g}(0)}{\~N}&\geq 0\quad\forall \g\in C^{1}((-\e,0],M_{t})\cn \g(0)\in\del X(\Di) \end{align}
		\end{subequations}
with inital embedding $X_{0}.$

Properties of such mean curvature flows with boundary conditions were studied by A. Stahl in \cite{Stahl:06/1996} and \cite{Stahl:08/1996}. We now use Stahl's short time existence result \cite[Thm. 2.1]{Stahl:06/1996} in conjunction with the following strong maximum principle statement to obtain strictly convex approximating hypersurfaces arbitrarily close to $M_0$ in $C^{2,\alpha}.$ First we need a lemma to ensure that a nontrivial $M$ has a strictly convex point.

\begin{lemma}\label{Height}
Let $M\sub\R^{n+1}$ be a smooth and weakly convex hypersurface perpendicular to $\S^{n}$ from the inside with embedding vector $X$. Then either $\del M$ is an equator of the sphere or there exists $x\in \Di$ such that the second fundamental form of $M$ at $x$ is positive definite.
\end{lemma}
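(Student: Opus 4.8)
The plan is to argue by contraposition: assume $M$ has no strictly convex interior point, so that the second fundamental form $A$ is nowhere positive definite on $\Di$, and deduce that $\del M$ is an equator. The whole argument rests on one elementary computation. For $c\in\R^{n+1}$ set $f_c=\tfrac12|X-c|^2$; using the Gauss formula one finds on $M$
\[\nabla^2 f_c=g+\inpr{X-c}{N}\,A.\]
At an interior maximum of $f_c$ the Hessian is negative semidefinite, so there $\inpr{X-c}{N}<0$ and hence $A\geq |\inpr{X-c}{N}|^{-1}g>0$; in other words, an interior maximum of $f_c$ is automatically a strictly convex point.

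Taking $c=0$, if $\sup_M|X|^2>1$ then, since $|X|^2\equiv1$ on $\del M\sub\S^n$, this supremum is attained at an interior point, which is then strictly convex, contrary to our assumption. Hence $M\sub\ov{B}$, the closed unit ball, and $|X|^2\le1$ with equality exactly on $\del M$. It remains to treat this contained case, in which the boundary data really enter.

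First I record the relevant boundary geometry: along $\del M$ the perpendicularity condition forces the outward conormal of $M$ to equal the position vector, $\mu=X=\~N$, the normal $N$ to be the inner conormal $\nu$ of $\del M$ in $\S^n$, and the restriction $A|_{T\del M}$ to coincide with the second fundamental form of $\del M\sub\S^n$; in particular $\inpr{X}{N}=0$ on $\del M$, and $\del M$ is an equator precisely when $\del M$ is totally geodesic in $\S^n$. Now I exploit the degeneracy: at every point choose a null direction $\xi\in\ker A$. Since $\nabla^2 f_0(\xi,\xi)=|\xi|^2>0$, the function $|X|^2$ is strictly convex along the totally geodesic leaves of the relative-nullity distribution, and these leaves are straight segments in $\R^{n+1}$ along which $N$ is constant, say $N\equiv N_0$. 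A leaf reaches $\del M$ at a point $q$, where perpendicularity gives $\inpr{q}{N_0}=0$; as the leaf direction is orthogonal to $N_0$, this propagates to $\inpr{X}{N_0}\equiv0$ along the entire leaf. The leaves fill $M$, so $\inpr{X}{N}\equiv0$ on $M$, i.e. the position vector is everywhere tangent to $M$. Consequently $M$ is invariant under dilations about the origin and is therefore the cone over $\del M$ with vertex $0\in M$; a cone over a subset of $\S^n$ is smooth at its vertex only if that subset is a totally geodesic $\S^{n-1}$. Since $M$ is smooth, $\del M$ is an equator, as required.

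The main obstacle is the rigorous treatment of the relative-nullity step in the last paragraph. One must work on the open regular stratum where $\dim\ker A$ is locally constant, show that the nullity distribution is smooth, integrable and totally geodesic there, and, most delicately, that each leaf is a complete segment meeting $\del M$ rather than accumulating on the higher-nullity singular set; only then does the boundary identity $\inpr{X}{N}=0$ feed back to give $\inpr{X}{N}\equiv0$ globally. An alternative is to prove $\inpr{X}{N}\equiv0$ on $\ov{M}$ directly by a degenerate-elliptic maximum principle built on $A\ge0$ and the boundary value $\inpr{X}{N}|_{\del M}=0$; in either route it is precisely the degeneracy of $A$ for weakly convex $M$ that must be overcome, exactly the difficulty anticipated at the start of this section.
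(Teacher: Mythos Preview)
Your proposal is incomplete, as you yourself concede: the relative-nullity step in the second half is only sketched, and the obstacles you list (smoothness of the nullity distribution where its rank jumps, and showing that each leaf actually reaches $\del M$ rather than terminating on the singular stratum) are genuine and not resolved. The alternative degenerate-elliptic maximum principle is likewise not carried out. So as it stands this is not a proof.

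The paper's argument avoids this difficulty entirely and is much shorter. It does not argue by contraposition and never needs to analyze the case $M\sub\ov{\B}$. Instead, assuming $\del M$ is not an equator, one uses that $\del M$ is a convex hypersurface of $\S^n$ (a consequence of the perpendicularity condition) and invokes the do~Carmo--Warner classification to place $\del M$ strictly inside an open hemisphere $\mc{H}(e_0)$. The linear height $w=\inpr{X}{e_0}$ then satisfies $\n_\mu w=\inpr{e_0}{X}=w>0$ along $\del M$ (since the outward conormal equals $X$ by perpendicularity), so the global minimum of $w$ lies in the interior. Touching $M$ from below at this interior minimum by a large sphere forces $A$ there to dominate the sphere's second fundamental form, yielding a strictly convex point.

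Your own Hessian computation already contains this last step: take $c=Re_0$ with $R$ large. Then $f_c=\tfrac12|X|^2-Rw+\tfrac12 R^2$, so for $R\gg 1$ the maximum of $f_c$ on $M$ is attained near the interior minimum of $w$, hence at an interior point, and your identity $\nabla^2 f_c=g+\inpr{X-c}{N}A$ gives strict convexity there immediately. The missing idea in your attempt was not a nullity analysis of the degenerate case, but simply the right choice of center $c$; that choice is supplied by the boundary geometry via do~Carmo--Warner.
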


\pf{
By \cite[Lemma~4]{LambertScheuer:04/2016} $\del M\sub\S^n$ is a convex hypersurface of the sphere which is either an equator or strictly contained in an open hemisphere by the classical results in \cite{CarmoWarner:/1970}. In the first case we are done. In the second case we pick a point $e_0\in \mrm{conv}\br{\del M}\sub\S^n,$ where the latter denotes the spherical convex body bounded by $\del M,$ such that also 
\eq{\del M\sub \mrm{int}\br{\mc{H}(e_0)},}
where $\mc{H}(e_0)$ denotes the closed hemisphere with center $e_0$.
By \cite[Lemma~5]{LambertScheuer:04/2016} the height
\eq{w=\ip{X}{e_0}}
over the hyperplane $e_0^{\perp}$ does not attain its global minimum on the boundary of $\Di.$ By attaching a large supporting sphere to $M$ from below we find the existence of a strictly convex point.
}

Now we can prove the approximation result. A similar technique was used in \cite{Hamilton:/1986}.

\begin{thm}\label{Approx}
Suppose $F\cn\bb{D}\times[0,T) \ra \bb{R}^{n+1}$ is a solution to \eqref{MCF} with initial hypersurface $M_0$ being weakly convex and perpendicular to the sphere from the inside. Then either $\del M_0$ is an equator of the sphere or $\br{h_{ij}}>0$ for $t>0$.  
\end{thm}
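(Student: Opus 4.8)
The plan is to use the parabolic strong maximum principle (Hamilton's tensor version) applied to the evolution of the second fundamental form $h_{ij}$ under mean curvature flow, combined with the boundary analysis. Under mean curvature flow the second fundamental form satisfies an evolution equation of the schematic form $\dot h_{ij} = \Delta h_{ij} + \|A\|^2 h_{ij} - 2 H h_{ik}h^k_j$ (the standard Simons-type identity), which is a reaction-diffusion system whose reaction term preserves the cone of nonnegative symmetric $2$-tensors. Since $M_0$ is weakly convex we start with $h_{ij}\geq 0$, so by the maximum principle weak convexity is preserved for $t>0$; the real content is to upgrade this to \emph{strict} positivity everywhere for $t>0$.

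First I would invoke \cref{Height}: since we are in the case where $\del M_0$ is not an equator, there exists an interior point where $A$ is positive definite at $t=0$. The strategy is then to show that this strict positivity propagates instantaneously to all of $M$ for every $t>0$. For the interior this is exactly Hamilton's strong maximum principle for tensors \cite{Hamilton:/1986}: once the null space of $h_{ij}$ is nonempty one shows it would have to be a parallel, invariant distribution, but the existence of even one strictly convex point rules this out, forcing $h_{ij}>0$ throughout the interior for $t>0$. I would state the reaction term satisfies the null-eigenvector condition (the ``first-order'' hypothesis in Hamilton's theorem) and check convexity of the cone is preserved, then cite the theorem.

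The hard part, and the reason \cref{Height} and the Neumann condition matter, will be the \textbf{boundary}: Hamilton's interior maximum principle does not by itself control $h_{ij}$ on $\del M_t$, and one must verify that the free boundary condition (perpendicularity to $\S^n$) does not allow a degenerate eigenvalue of $A$ to persist at $\del M_t$. Here I would use the boundary derivative relations recorded in \cite{LambertScheuer:04/2016} — the perpendicularity condition forces a Robin-type relation between $h_{ij}$ and its normal derivative at $\del M_t$, effectively making the boundary ``convexity-nondecreasing.'' The cleanest route is to show the boundary acts favorably for the maximum principle (a Hopf-lemma-type argument at boundary points where the smallest eigenvalue of $A$ might vanish), so that the minimum of the smallest eigenvalue over all of $\ov{M_t}$ cannot be attained on the boundary with value zero unless it is zero in the interior too, which the interior argument has already excluded. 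Combining the interior strong maximum principle with this boundary control yields $(h_{ij})>0$ on all of $\ov{M_t}$ for $t>0$.

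In summary, the proof splits as: (i) reduce to the non-equator case via \cref{Height} to obtain an initial strictly convex point; (ii) apply Hamilton's strong maximum principle to the reaction-diffusion system for $h_{ij}$ to get interior strict convexity for $t>0$; and (iii) handle $\del M_t$ using the perpendicular boundary condition and the associated boundary identities from \cite{LambertScheuer:04/2016} to extend strict convexity up to the boundary. The step I expect to be the main obstacle is (iii), since the behavior of the second fundamental form under the free boundary condition is delicate and Stahl's framework \cite{Stahl:06/1996,Stahl:08/1996} must be used carefully to justify that the boundary terms have the correct sign.
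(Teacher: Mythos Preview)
Your outline has the right skeleton (invoke \cref{Height}, exploit the reaction-diffusion structure of $\dot h_{ij}=\Delta h_{ij}+\|A\|^2 h_{ij}-2Hh_i^kh_{kj}$, and treat the boundary via Stahl's framework), but the paper executes the ``strong'' step quite differently, and the difference matters.

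You propose to apply Hamilton's tensor strong maximum principle directly to $h_{ij}$ in the interior and then patch the boundary with a Hopf-type argument. The issue is that Hamilton's strong maximum principle is formulated on closed manifolds; on a manifold with boundary the invariant-parallel-subbundle conclusion does not follow without an additional argument, and no off-the-shelf ``tensor Hopf lemma'' with Neumann data is available to cite. So step~(ii) and step~(iii) in your plan are not really separable: you would effectively have to prove a tensor strong maximum principle with Neumann boundary conditions from scratch, which you have not done.

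The paper sidesteps this entirely with an auxiliary-barrier trick (in the spirit of \cite{Hamilton:/1986}): from the single strictly convex point supplied by \cref{Height} one builds a smooth scalar $\phi_0$ with $0\leq\phi_0\leq\min_{|V|=1}h_{ij}V^iV^j$ and $\phi_0\not\equiv 0$, and then evolves it by the \emph{scalar} heat equation with Neumann data. The scalar strong maximum principle (which \emph{is} standard with Neumann boundary, see \cite[Cor.~3.2]{Stahl:06/1996}) gives $\phi(\cdot,t)>0$ for $t>0$. One then applies only the \emph{weak} tensor maximum principle of Stahl \cite[Thm.~3.3, Lemma~3.4]{Stahl:06/1996} to $M_{ij}=h_{ij}-\phi g_{ij}$, checking the null-eigenvector condition in the interior and, at the boundary, using Stahl's structural identities \cite[Thm.~4.3]{Stahl:08/1996} (not the identities from \cite{LambertScheuer:04/2016}) to verify $\nabla_\mu M_{ij}V^iV^j\geq 0$ on minimal eigenvectors. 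This yields $M_{ij}\geq 0$, hence $h_{ij}\geq\phi g_{ij}>0$ for $t>0$.

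In short: the paper trades a tensor \emph{strong} maximum principle with boundary (which you would need but do not have) for a scalar strong maximum principle plus a tensor \emph{weak} maximum principle with boundary, both of which are available in the literature. That reduction is the missing idea in your proposal.
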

\begin{proof}
If $\del M_0$ is not an equator, then due to \cref{Height} there exists a strictly convex point.
 
Let 
\eq{\chi(x,t) =\underset{|V|=1} \min h_{ij}V^iV^j.}
Due to the smoothness of $h_{ij}$, $\chi(x,t)$ is Lipschitz continuous in space and therefore by a simple cut-off function argument we find a smooth function $\phi_0:M^n\ra \bb{R}$ so that $0\leq \phi_0 \leq \chi(x,0)$ and there exists $y\in M^n$ so that $\phi_0(y)>0$. We now extend this function to $\phi:\bb{D}^n\times[0,\delta)\ra \bb{R}$ by a heat flow, 
\begin{equation}\label{Heatflow}
 \begin{cases}
  \left(\pard{}{t} - \Delta \right) \phi =0 & \text{on } \text{int}(\bb{D})\times[0,\tau)\\
  \n_\mu \phi = 0 & \text{on } \partial \bb{D}\times [0,\tau)\\
  \phi(\cdot, 0) = \phi_0(\cdot),
 \end{cases}
\end{equation}
where $\D$ is the time dependent Laplace-Beltrami operator of the metrics induced by the solution $F$ of \eqref{MCF}.
This is a linear parabolic PDE and so by standard theory a solution exists for a short time $\tau>0$. By the strong maximum principle (e.g. \cite[Cor.~3.2]{Stahl:06/1996}), for $t>0$ we have $\phi(\cdot,t)>0$ in $\Di$.

We now consider 
\eq{M_{ij} = h_{ij} - \phi g_{ij}}
as long both the MCF and the heat flow exist, say for $0\leq t<\tau$. We know that at time $t=0$ we have $M_{ij}\geq 0$ by construction of $\phi.$ We now aim to apply the weak maximum principle with Neumann boundary conditions, \cite[Thm.~3.3, Lemma~3.4]{Stahl:06/1996}. 

Using the evolution equations in \cite[p.~432]{Stahl:08/1996}, we have that on the flowing manifold 
\begin{align}
 \left(\frac{\partial}{\partial t} - \Delta\right) M_{ij} &= |A|^2h_{ij}-2H h_i^kh_{kj} + 2\phi H h_{ij}=:N_{ij}.
\end{align}
We see that for a unit vector $v$ such that 
\eq{M_{ij}v^i =h_{ij}v^i -\phi g_{ij}v^i=0,} we obtain
\eq{N_{ij}v^iv^j = |A|^2\phi - 2H\phi^2 +2H\phi^2 = |A|^2\phi\geq 0,}
that is, the evolution of $M_{ij}$ satisfies a null eigenvector condition.

For a better comparability to the results in \cite{Stahl:06/1996} and \cite{Stahl:08/1996} we switch to Stahl's notation, so that for $p\in \bb{S}^n$ write $\mu\in T_pM$ for the outward pointing normal to $\bb{S}^n$. Due to \cite[Thm.~4.3 (i)]{Stahl:08/1996}, at a point $p\in\partial M$ for basis tangent vectors $\del_{I}\in T_p M \cap T_p \bb{S}$, the basis
\eq{\mc{B}=\br{\mu,\del_I}_{2\leq I\leq n}}
induces the coordinate representation $M_{I\mu} = 0$. That is $\mu$ is both an eigenvector of $M_{ij}$ and a principal direction at the boundary. We now demonstrate that the conditions of \cite[Lemma 3.4]{Stahl:06/1996} hold. For $\del_I, \del_J \in T_p M \cap T_p\bb{S}^n$, \cite[Thm.~4.3 (ii), (iii)]{Stahl:08/1996} give
\begin{equation}\n_\mu M_{IJ} = h_{\mu\mu}\d_{IJ} - h_{IJ},\qquad \n_\mu M_{\mu\mu} = 2H - nh_{\mu\mu}.
 \label{boundaryMij}
\end{equation}

We suppose first that $V\in T_p \br{\partial M}$ is a minimal eigenvector with eigenvalue $\l\in(-\delta,0]$, that is 
\eq{M_{ij}V^i = \lambda g_{ij}V^i.}
We see that $V$ is also a minimal eigenvector of $h_{ij}$, and therefore 
\eq{h_{ij}V^iV^j\leq h_{\mu\mu}.} Equation (\ref{boundaryMij}) now implies $\nabla_\mu M_{IJ}V^IV^J\geq 0$. 

Now suppose that $\mu$ is a minimal eigenvector with eigenvalue $\l\in(-\delta,0]$. Again minimality of $\mu$ implies that for all $W\in T_p\br{\partial M}$ there holds
\eq{h_{ij}W^iW^j\geq h_{\mu\mu}.}
In particular this implies $H\geq nh_{\mu\mu}$, and so $\n_\mu M_{\mu\mu}\geq H\geq 0$, where we used \cite[Thm.~3.1]{Stahl:08/1996}.

We may now apply \cite[Thm.~3.3, Lemma 3.4]{Stahl:06/1996}, to give that $M_{ij}\geq 0$. Since $\phi>0$ for $t>0$, $h_{ij}>0$ for $\tau>t>0$. This then holds for all time that the flow exists by applying \cite[Prop.~4.5]{Stahl:08/1996} to the mean curvature flow defined by $F(x,t-\frac{\tau}{2})$. 
\end{proof}
\begin{cor} \label{approx}
 Suppose $M$ is a weakly convex hypersurface perpendicular to the sphere from the inside, such that $\del M$ is not an equator. Then there exists an $\epsilon>0$ such that for $0\leq t<\epsilon$ there are smooth and strictly convex hypersurfaces perpendicular to the sphere from the inside and satisfy 
 \[\int_{M_t} H^2 \ra \int_{M} H^2, \ \ |M_t|\ra |M|, \ \ |\partial M_t| \ra |\partial M|\]
 as $t \ra 0$.
\end{cor}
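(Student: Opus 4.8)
The plan is to show that the mean curvature flow \eqref{MCF} starting from the weakly convex hypersurface $M$ immediately produces, for each small $t>0$, a hypersurface $M_t$ with all the required properties, and that the relevant geometric quantities converge back to those of $M$ as $t\ra 0$. The strict convexity for $t>0$ is exactly the content of \cref{Approx}, since $\del M$ is assumed not to be an equator. The perpendicularity to the sphere from the inside is built into the boundary conditions of \eqref{MCF} and is preserved along the flow, as established in Stahl's work \cite{Stahl:06/1996,Stahl:08/1996}; smoothness for $t>0$ follows from parabolic regularity. So the only genuine work is the convergence of $\int_{M_t}H^2$, $|M_t|$ and $|\del M_t|$ as $t\ra 0$.

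For the convergence, I would invoke Stahl's short time existence result \cite[Thm.~2.1]{Stahl:06/1996}, which gives a solution $F$ of \eqref{MCF} on some interval $[0,T)$ with $F(\cdot,t)\ra X_0$ in $C^{2,\a}$ as $t\ra 0$. Each of the three functionals is a continuous function of the embedding in the $C^{2,\a}$ (indeed $C^2$) topology: the area $|M_t|$ and boundary length $|\del M_t|$ depend only on the induced metric, hence on the first derivatives of $F$, while $\int_{M_t}H^2$ depends on the first and second derivatives through $H$ and the volume element. Since $F(\cdot,t)\ra X_0$ in $C^{2,\a}$ on the compact domain $\ov{\Di}$, all three integrands converge uniformly and the domains converge, so the integrals converge to the corresponding quantities for $M$. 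One simply sets $\e=T$ (or any smaller positive time for which strict convexity and the flow both persist).

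The main obstacle, and the reason the corollary is not completely immediate, is ensuring that the short-time solution really attains its initial data in a topology strong enough to control the second fundamental form. Area and boundary length would already converge under mere $C^1$-convergence, but $\int_M H^2$ requires $C^2$-control up to and including the boundary, so I must be careful that Stahl's existence theory delivers convergence in $C^{2,\a}(\ov{\Di})$ rather than only in the interior or in a weaker norm. Granting the $C^{2,\a}$ initial attainment from \cite[Thm.~2.1]{Stahl:06/1996}, the rest is the routine continuity argument above, and one chooses $\e>0$ small enough that both the flow \eqref{MCF} exists and the conclusion $(h_{ij})>0$ of \cref{Approx} holds on $(0,\e)$.
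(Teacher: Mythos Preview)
Your proposal is correct and follows essentially the same route as the paper: invoke Stahl's short-time existence \cite[Thm.~2.1]{Stahl:06/1996} to obtain a solution in $C^{\infty}(\Di\times(0,\e))\cap C^{2+\a;1+\a/2}(\Di\times[0,\e))$, use \cref{Approx} for strict convexity when $t>0$, and read off the convergence of the three functionals from the $C^{2,\a}$ attainment of the initial data. The paper's proof is just a two-line compression of exactly this argument, and your explicit identification of the needed $C^{2,\a}$ regularity up to $t=0$ as the only real issue matches precisely the regularity class the paper quotes from Stahl.
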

\begin{proof}
 By \cite[Thm.~2.1]{Stahl:06/1996} there exists a solution to equation (\ref{MCF}) for $F\in C^\infty(\bb{D}\times(0,\e))\cap C^{2+\alpha; 1+\frac{\alpha}{2}}(\bb{D}\times[0,\e))$. The convergence then follows due to the regularity of the flow at $t=0$.  
\end{proof}

Now we can prove a crucial estimate for the volume.

\begin{lemma}[Volume estimate]\label{Areaest}
Let $M$ be a weakly convex hypersurface perpendicular to the sphere from the inside such that $\del M$ is not an equator. Let 
\eq{C_M = \{X \in \bb{R}^{n+1} \cn X=\l p, p\in \partial M, 0\leq \l\leq 1\}.}
Then there holds
\eq{|M|\leq |C_M|.} 
\end{lemma}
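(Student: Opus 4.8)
The plan is to reduce the assertion to an estimate on $M$ alone and then exploit convexity together with the orthogonality along $\del M$.

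First I would compute $|C_M|$ explicitly. Parametrising $C_M$ by $(\l,p)\in[0,1]\x\del M$ via $(\l,p)\mt\l p$, and noting that the radial direction $p$ is a unit vector orthogonal to the scaled tangential directions $\l\,\del p$, the volume element factors as $\l^{n-1}$ times the surface element of $\del M\sub\S^n$; integrating $\int_0^1\l^{n-1}\,d\l=\fr1n$ gives
\[|C_M|=\fr1n|\del M|.\]
Hence the asserted inequality $|M|\le|C_M|$ is equivalent to the sharp bound $n|M|\le|\del M|$, and this is what I would prove.

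Next I would set up a basic integral identity on $M$. Writing $X$ for the position vector and using the Gauss formula $\D_M X=-HN$, with $N$ the normal making $M$ convex so that $H\ge0$, one has $\D_M\tfrac12|X|^2=n-H\ip{X}{N}$. Since $\n^M\tfrac12|X|^2=X^\top$ is the tangential part of $X$, the divergence theorem on $M$ yields
\[n|M|-\int_M H\ip{X}{N}=\int_{\del M}\ip{X}{\nu},\]
where $\nu$ is the outward conormal. Here the orthogonality enters decisively: at a boundary point $X=p\in\S^n$ the normal $N$ is tangent to the sphere, so $X=p$ is tangent to $M$ and coincides with the outward conormal (equivalently $\nu=\~N$ along $\del M$). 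Thus $\ip{X}{\nu}=|p|^2=1$ and the boundary integral equals $|\del M|$, giving the identity
\[n|M|-|\del M|=\int_M H\ip{X}{N}.\]
So $n|M|\le|\del M|$ follows as soon as I show $\int_M H\ip{X}{N}\le0$.

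Finally, and this is the heart of the matter, I would establish the pointwise sign $\ip{X}{N}\le0$ on $M$; with $H\ge0$ this closes the argument. The orthogonality already gives $\ip{X}{N}=0$ on $\del M$, and a short computation gives $\n^M\ip{X}{N}=A(X^\top,\cdot)$, whence $\del_\nu\ip{X}{N}=A(\nu,\nu)\ge0$ at the boundary, so $\ip{X}{N}$ is nonpositive near $\del M$. The main obstacle is propagating this sign through the interior: the natural elliptic identity $\D_M\ip{X}{N}=\ip{\n H}{X^\top}+H-\|A\|^2\ip{X}{N}$ carries the uncontrolled gradient term $\ip{\n H}{X^\top}$, so a bare maximum principle does not apply. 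Instead I would argue by convex geometry. By convexity the tangent hyperplane at any $X_0\in M$ supports $M$, so $M$ lies in the halfspace $\{\,\ip{Y-X_0}{N_0}\le0\,\}$; the claim $\ip{X_0}{N_0}\le0$ is precisely that the origin lies on the outer side of this hyperplane for every $X_0$. This is forced by orthogonality with $\S^n$: in the model where $M$ is a piece of a sphere of radius $\rho$ centred at $c$, perpendicular intersection reads $|c|^2=1+\rho^2$, hence $|c|>\rho$, so the origin lies outside the osculating ball and thus on the outer side of every tangent plane of the cap. I would make this rigorous in general by attaching supporting spheres from the convex side, as in the proof of \cref{Height}, and using that $\del M\sub\S^n$ is met orthogonally, to conclude that the origin cannot be separated from the outward side of $M$; the bound $\ip{X}{N}\le0$ then holds throughout $M$.
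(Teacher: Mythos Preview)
Your route is genuinely different from the paper's. You compute $|C_M|=\tfrac1n|\del M|$ and reduce the lemma to $n|M|\le|\del M|$, which via the divergence identity and the perpendicularity $\nu=\~N=X$ on $\del M$ becomes $\int_M H\ip{X}{N}\le 0$. The paper does not touch this integral at all: it uses the graphical representation of $M$ obtained in \cite{LambertScheuer:04/2016} to show $\mrm{int}(M)\sub\B^{n+1}$, hence $M\sub\hat C_M$, and then observes that from each $X_0\in M$ the ray $X_0+tN_0$ (with $t\ge 0$, using $\ip{N}{e_0}<0$) must exit $\hat C_M$ through the lateral piece $C_M$. Thus the nearest-point projection from the Lipschitz hypersurface $C_M$ onto the convex $M$ is surjective and $1$-Lipschitz, and the area formula gives $|M|\le|C_M|$.

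The genuine gap in your argument is the last step, the pointwise sign $\ip{X}{N}\le 0$. You assert that ``by convexity the tangent hyperplane at any $X_0\in M$ supports $M$'', but the paper explicitly warns (in the introduction, citing \cite{Ghomi:/2001}) that for convex hypersurfaces \emph{with boundary} this supporting-hyperplane property is not automatic; the paper recovers it only after establishing, via approximation and \cite{LambertScheuer:04/2016}, that $M$ is a graph over a convex base domain. Even granting that $M$ lies on one side of each tangent plane, you still owe the fact that the origin lies on the other side. Your model computation for a perpendicular spherical cap gives $\ip{X}{N}=\tfrac{|X|^2-1}{2\rho}$, which is exactly the statement $|X|\le 1$ in disguise; but for a general convex $M$ there is no osculating sphere containing $M$, so the heuristic ``attach supporting spheres from the convex side'' does not pin down the position of the origin relative to $T_{X_0}M$. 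In graph coordinates the claim is that every tangent plane of $u$ has nonnegative height over $0\in e_0^{\perp}$, and this needs precisely the inputs the paper imports from \cite{LambertScheuer:04/2016} ($M$ graphical, $\mrm{int}(M)\sub\B^{n+1}$, choice of $e_0\in\mrm{int}\br{\mrm{conv}_{\S^n}(\del M)}$). So your reduction is clean, but the final paragraph is a plausibility sketch rather than a proof; the paper's projection argument, by contrast, closes the estimate in one stroke once the same geometric facts are in hand.
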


\pf{
Since $M$ induces a convex hypersurface $\del M$ of the sphere $\S^n,$ as in the proof of \cref{Height} we may pick a point $e_0\in\S^n$ such that
\eq{\del M\sub\mrm{int}(\mc{H}(e_0)),\quad e_0\in\mrm{int}\br{\mrm{conv}_{\S^n}(\del M)},}
where $\mc{H}(e_0)$ denotes the closed hemisphere around $e_0\in\S^n,$
compare \cite[Lemma~4]{LambertScheuer:04/2016}. Approximation of $M$ by strictly convex hypersurface according to \cref{approx} and \cite[Lemma~11]{LambertScheuer:04/2016} yields that $M$ is a standard graph over a convex domain $\-\O\sub \R^n\x \{0\}.$ Due to convexity $M$ lies above each of its tangent hyperplanes and hence the proofs of \cite[Cor.~1-Cor.~3, Lemma~12]{LambertScheuer:04/2016} apply literally to yield 
\eq{\mrm{int}(M)\sub \B^{n+1},}
where the latter denotes the open unit ball of $\R^{n+1}.$ This implies
\eq{M\sub \hat{C}_M:=\{X\in\R^{n+1}\cn X=\l p, p\in \mrm{conv}_{\S^n}(\del M), 0\leq \l\leq 1\}.}
For any $X\in M$ the straight line
\eq{\g(t)=X+tN,\quad t\geq 0,}
must eventually leave $\hat{C}_M,$ since $\ip{N}{e_0}<0.$ Thus $\g$ eventually hits $C_M$ and we see that the least distance projection from the Lipschitz graph $C_M$ to $M$ is surjective. Since this projection is Lipschitz with Lipschitz constant at most $1$, the desired estimate now follows from the area formula, compare \cite[Sec.~3.3]{EvansGariepy:/1992} for example.
}



\section{Proof of \texorpdfstring{\cref{main}}{}}

If $\del M$ is an equator, \eqref{main1} is trivial. Due to \cref{approx} we see that \eqref{main1} now also holds for weakly convex hypersurfaces. So all we have to prove is the characterization of the limit. So suppose that \eqref{main1} holds with equality. If $\del M$ is an equator, then $M$ must be a convex minimal surface, hence totally umbilic and hence a hyperplane. So we may suppose that $\del M$ is not an equator, which in particular implies that 
\eq{|M|\leq |C_M|<\o_n, }
where we used \cref{Areaest}.

Due to \cref{approx} for every $\e>0$ there exists a strictly convex hypersurface perpendicular to the sphere from the inside $M^{\e}$ such that
\eq{Q(M^{\e})\leq Q(M)+\e,}
where $Q(M)$ is the quantity in \eqref{Q} evaluated at the hypersurface $M.$
Starting the flow \eqref{IMCF} with initial hypersurface $M_{\e},$ flow hypersurfaces $M_t^{\e}$ and maximal existence time
\eq{T^{*}_{\e}=\log\br{\fr{\o_n}{|M^{\e}|}},}
in view of \eqref{Qdot} the corresponding quantities $Q^{\e}(t)$ satisfy
\eq{\dot{Q}^{\e}(t)&\leq \br{\o_n^{\fr{2-n}{n}}-|M_t^{\e}|^{\fr{2-n}{n}}}|\del M_t^{\e}|\\
			&=\o_n^{\fr{2-n}{n}}\br{1-e^{\fr{n-2}{n}(T^{*}_{\e}-t)}}|\del M^{\e}_t|.}
Due to \cref{Areaest} and \cref{approx} there exists a positive time $T$ which only depends on $|M|$ and is independent of $\e,$ such that
\eq{T_{\e}^{*}\geq 2T>0.}
Hence for all $\e$ and all $0\leq t\leq T$ there holds
\eq{\dot{Q}^{\e}(t)\leq -c\br{1-e^{\fr{n-2}{n}T}}\equiv -c, }
where $c>0$ only depends on $n,$ $|M|$ and $|\del M|.$
Using \cref{strictlyconvexmain} we obtain, also using that the strict convexity of $M_{\e}$ is preserved, that
\eq{\o_n^{\fr{2-n}{n}}|\S^{n-1}|< Q^{\e}(T)&=Q(M_{\e})+\int_0^T\dot{Q}^{\e}(s)~ds\\
				&\leq Q(M)+\e-cT\\
                &= \o_n^{\fr{2-n}{n}}|\S^{n-1}|+\e-cT, }
giving a contradiction for small $\e$ and completing the proof.

\subsection*{Acknowledgements}
We would like to thank Florian Besau for a hint about the orthographic projection of a spherically convex set.

\bibliographystyle{hamsplain}
\bibliography{Bibliography}
\end{document}